
\documentclass[11pt,reqno]{amsart}
\usepackage{amsmath,amssymb,rawfonts,enumerate}
\usepackage{tikz}
\usepackage{color}
\usepackage{soul}
\usepackage[english]{babel}
\usepackage{graphicx}
\usepackage[utf8]{inputenc}
\newcommand{\be}{\begin{equation}}
\newcommand{\ee}{\end{equation}}

\newtheorem{theorem}{Theorem}
\newtheorem{lemma}[theorem]{Lemma}
\newtheorem{defi}[theorem]{Definition}
\newtheorem{prop}[theorem]{Proposition}
\newtheorem{coro}[theorem]{Corollary}

\pagestyle{plain}

\long\def\alert#1{\parindent2em\smallskip\hbox to\hsize%
{\hskip\parindent\vrule%
\vbox{\advance\hsize-2\parindent\hrule\smallskip\parindent.4\parindent%
\narrower\noindent#1\smallskip\hrule}\vrule\hfill}\smallskip\parindent0pt}

\begin{document}  
\title{A Note on Locally Compact Subsemigroups of Compact Groups}
\author{Julio C\'esar Hern\'andez Arzusa and Karl H. Hofmann}
\maketitle 

\begin{abstract}
An elementary proof is given for the fact
 that every locally compact subsemigroup of a compact topological group 
is a closed subgroup. A sample consequence is that  every 
commutative cancellative 
pseudocompact locally compact Hausdorff topological semigroup  with open 
shifts is a compact topological group.

\noindent
{\it Mathematics Subject Classification 2010:} Primary: 20M10, 22A25, \hfill\break
 22C05;  Secondary: 54B30, 54H10.

\noindent
{\it Keywords and phrases:} Topological semigroup, compact group, 
cancellative semigroup, precompact, pseudocompact. 
\end{abstract}

\bigskip

\noindent  {\bf The Basics}\qquad A  nonempty subset $S$ of a group satisfying  $SS\subseteq S$ is called a subsemigroup. 
All topological spaces are considered Hausdorff spaces.

\begin{defi} \label{1.2} 
\begin{enumerate}[a)]
    \item A topological group is called 
{\em Weil-adapted}  if the closure of each subsemigroup is a group.
\item  For a subset $A$ of a topological space $X$, a point $x\in A$ is a 
\emph{conditionally inner point} of $A$, if $x$ has an open 
neighborhood $U$ such that $x\in \overline{A}\cap U\subseteq A$. 
\end{enumerate}


\end{defi}

The following lemma has a remarkably elementary, self-contained,  and straightforward proof:\newpage

\begin{lemma}\label{297} Let $S$ be a subsemigroup of a
Weil-adapted topological group $G$.  If $S$ has a conditionally inner point,
then $S$ is a closed subgroup of $G$.
\end{lemma}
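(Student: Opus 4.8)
The plan is to show that the stated hypotheses force $S=\overline S$. Since $\overline S$ is a subsemigroup of $G$ — indeed $\overline S\cdot\overline S\subseteq\overline{SS}\subseteq\overline S$ by continuity of the multiplication — the Weil-adapted property makes $\overline S$ a group, and then $S=\overline S$ is automatically a closed subgroup of $G$, which is the claim. I would first record the structural facts I shall use. The identity $e$ of the group $\overline S$ is an idempotent of $G$, hence $e=1_G$, and consequently inverses computed in the group $\overline S$ coincide with inverses in $G$; thus $\overline S$ with the subspace topology is a topological group, its multiplication and inversion being restrictions of the continuous operations of $G$. Moreover $S$ is dense in $\overline S$. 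Finally, the hypothesis furnishes $x\in S$ together with an open $U\subseteq G$ such that $x\in\overline S\cap U\subseteq S$; write $V:=\overline S\cap U$, a nonempty open subset of the space $\overline S$ with $V\subseteq S$.

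The heart of the argument is to replace $V$ by an open neighborhood of $e$ that is still contained in $S$. Since inversion is a self-homeomorphism of $\overline S$, the set $V^{-1}$ is nonempty and open in $\overline S$, so density of $S$ yields a point $p\in S\cap V^{-1}$. Putting $q:=p^{-1}$, we get $q\in V\subseteq S$ while also $q^{-1}=p\in S$. Then $N:=q^{-1}V$ is open in $\overline S$ (left translation by $q^{-1}\in\overline S$ is a homeomorphism), it contains $q^{-1}q=e$ because $q\in V$, and $N=q^{-1}V\subseteq SS\subseteq S$ because $q^{-1}\in S$ and $V\subseteq S$. So $N$ is an open neighborhood of the identity lying inside $S$.

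Finally I would spread $N$ over all of $\overline S$. Symmetrize: $M:=N\cap N^{-1}$ is an open neighborhood of $e$ in $\overline S$ with $M=M^{-1}$ and $M\subseteq N\subseteq S$; in particular $m^{-1}\in M^{-1}=M\subseteq S$ for every $m\in M$. For an arbitrary $s\in\overline S$, the set $sM$ is an open neighborhood of $s=se$ in $\overline S$, so density of $S$ provides $t\in S\cap sM$, say $t=sm$ with $m\in M$. Then $s=tm^{-1}$ with $t\in S$ and $m^{-1}\in S$, whence $s\in SS\subseteq S$. Thus $\overline S\subseteq S$, so $S=\overline S$ and the conclusion follows as noted at the outset. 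The one genuine obstacle is the construction in the middle paragraph of an identity neighborhood inside $S$ — together with the bookkeeping ensuring that the group operations of $\overline S$ are those inherited from $G$, so that all the inverses used make sense; everything else is a routine interplay of density with translation homeomorphisms.
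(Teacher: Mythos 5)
Your proof is correct. It shares the paper's basic strategy --- observe that $\overline S$ is a closed subsemigroup, hence a group by Weil-adaptedness, and then show $S=\overline S$ by exploiting that $S$ is dense in the topological group $\overline S$ and has a nonempty relatively open subset contained in it --- but the execution differs in two ways. The paper normalizes to $\overline S=G$, proves that the interior $T$ of $S$ is a left ideal, invokes Weil-adaptedness a \emph{second} time to conclude $\overline T=G$, and finishes by noting that $H=T\cap T^{-1}$ is an open dense subgroup and that open subgroups are closed, so $H=G\subseteq S$. You instead translate the relatively open set $V\subseteq S$ to the identity (choosing $p\in S\cap V^{-1}$ by density, so that $N=pV$ is an identity neighborhood inside $S$), symmetrize to $M=N\cap N^{-1}$, and write an arbitrary $s\in\overline S$ as $s=tm^{-1}$ with $t\in S\cap sM$ and $m\in M$, landing in $SS\subseteq S$. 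This buys a slightly leaner argument: Weil-adaptedness is used only once, and you need neither the ideal property of the interior nor the fact that an open subgroup is closed; the price is the small amount of bookkeeping verifying that $\overline S$ is a topological group whose identity and inversion are inherited from $G$, which you carry out correctly.
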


\begin{proof} Define  $C=\overline S$.  Then the continuity of the multiplication implies that 
$CC=\overline{S}\,  \overline{S}\subseteq\overline{SS}\subseteq\overline{S}=C$.
 So $C$  is a closed subsemigroup of $G$ and therefore is a group since $G$ is Weil-adpated.
 It is no 
loss of generality to assume that $C=G$. So we assume now that $S$ is dense in $G$
and we must show $S=G$. 
Let $T$ be the interior of $S$, then $T\ne\emptyset$ since $S$ has a conditionally inner point. 
Take $s\in S$ and $t\in T$, since left translations of $G$ are homeomorphisms, 
$sT$ is an open neighborhood of $st$ and $sT\subseteq sS\subseteq S$.
Hence $st$ is contained in the interior $T$ of $S$, so $T$ is a left ideal of $S$.
Let $D=\overline T$, then $D$ is a subgroup of $G$ since $G$ is Weil-adapted. 
Now $SD=S\overline T\subseteq \overline{ST}\subseteq \overline T=D$. Therefore $S\subseteq D$,
and since $S$ is dense in $G$, we have $D=G$, that is, $T$ is dense in $G$. 
  Now the mapping  $x\mapsto x^{-1}$ from $G$ to $G$
is a homeomorphism, 
thus $T^{-1}$ is the dense interior of $S^{-1}$.
Let $H=T\cap T^{-1}$, then $H$ is open and  dense in $G$, too, and
in addition, is closed under both multiplication and inversion. 
Hence  $H$ is an open dense subgroup of  $G$. 
But any open subgroup of a topological group is closed (as complement of the union of
all other cosets) and so  $H=G$ follows. 
Then $G=H=T\cap T^{-1}\subseteq T\subseteq S$ finally shows $S=G$,  completing
 the proof of the lemma.                          $\hfill\square$
\end{proof}

\begin{lemma}\label{310} In a  topological group $G$ the following conditions are equivalent:
\begin{enumerate}[i)]
    \item $G$ is Weil-adapted.
    \item Each closed  subsemigroup of $G$ is a group.
    \item  For each $g\in G$, the closure $\overline{\{g,g^2,g^3,\dots\}}$ is a subgroup of $G$.
\end{enumerate}
\end{lemma}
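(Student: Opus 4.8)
The plan is to prove the implications in the cycle $i) \Rightarrow ii) \Rightarrow iii) \Rightarrow i)$, since each one is either immediate or reduces to a case already handled.

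First, $i) \Rightarrow ii)$ is essentially definitional: if $C$ is a closed subsemigroup of $G$, then $C = \overline{C}$, and since $G$ is Weil-adapted the closure of the subsemigroup $C$ is a group, so $C$ itself is a group. The step $ii) \Rightarrow iii)$ is almost as quick: for any $g \in G$, the set $\{g, g^2, g^3, \dots\}$ is a subsemigroup of $G$ (it is closed under multiplication), and its closure $\overline{\{g, g^2, \dots\}}$ is a closed subsemigroup by the continuity-of-multiplication argument already used in the proof of Lemma \ref{297} (namely $\overline{A}\,\overline{A} \subseteq \overline{AA} \subseteq \overline{A}$). By hypothesis ii) this closed subsemigroup is a group.

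The substantive direction is $iii) \Rightarrow i)$, and this is where I expect the main work. Let $S$ be an arbitrary subsemigroup of $G$ and set $C = \overline{S}$; as before $C$ is a closed subsemigroup, so it suffices to show that a closed subsemigroup $C$ is a group. Pick any $c \in C$. By hypothesis iii), the closure $\overline{\{c, c^2, c^3, \dots\}}$ is a subgroup of $G$; call it $K$. Since $C$ is closed and contains every power of $c$, it contains $\overline{\{c, c^2, \dots\}} = K$, so in particular $C$ contains the identity $e$ of $G$ (the identity of the subgroup $K$) and contains $c^{-1}$ (the inverse of $c$ in $K$, which coincides with its inverse in $G$). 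As $c \in C$ was arbitrary, $C$ contains $e$ and is closed under inversion; being already a subsemigroup, $C$ is therefore a subgroup of $G$. This shows every closed subsemigroup is a group, and hence the closure of every subsemigroup is a group, i.e., $G$ is Weil-adapted.

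The only delicate point in the last paragraph is the identification of the inverse of $c$ taken inside the subgroup $K$ with the inverse of $c$ taken inside $G$: this is immediate because inverses in any subgroup agree with inverses in the ambient group, but it is worth stating explicitly since it is exactly the link that upgrades the cyclic-semigroup hypothesis iii) to a statement about arbitrary closed subsemigroups. Everything else is routine manipulation of closures and the semigroup property.
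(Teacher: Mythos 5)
Your proposal is correct and follows essentially the same route as the paper: the first two implications are handled identically, and for $iii)\Rightarrow i)$ the paper likewise takes an element $s$ of the closed subsemigroup $\overline{S}$, notes that $\overline{\{s,s^2,\dots\}}$ is a subgroup contained in $\overline{S}$, and concludes $s^{-1}\in\overline{S}$. Your explicit remark that inverses in a subgroup agree with inverses in $G$ is a harmless elaboration of what the paper leaves implicit.
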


\begin{proof} Trivially  $i)$ implies $ii)$, and since the closure of a subsemigroup of a
topological group is a subsemigroup according to the first step of the proof of Lemma 1,
also $ii)$ implies $iii)$. So we have to  
 prove that $iii)$ implies $i)$.  Indeed,  let $S$ be a subsemigroup of
 $G$ and let $s\in \overline S$ and consider $C=\overline{\{s,s^2,s^3,\dots\}}$.
Then $C$ is a subgroup of $G$ by $iii)$ and is contained in 
$\overline S$ by the definition of $C$.  So $s^{-1}\in C \subseteq \overline S$ which shows that $\overline S$ is a subgroup, which we had to show.$\hfill\square$
\end{proof}

\begin{lemma}\label{julio} Any subgroup of a Weil-adapted topological group is  Weil-adapated.
\end{lemma}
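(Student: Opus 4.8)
The plan is to use the characterization of Weil-adaptedness provided by Lemma~\ref{310}, namely condition $iii)$: a topological group is Weil-adapted if and only if for every element $g$, the closure of the monogenic subsemigroup $\{g,g^2,g^3,\dots\}$ is a subgroup. This reduces the problem from arbitrary subsemigroups to monogenic ones, which is the key simplification.

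So let $G$ be a Weil-adapted topological group and let $H$ be a subgroup of $G$, equipped with the subspace topology, so that $H$ is itself a topological group. To show $H$ is Weil-adapted I would verify condition $iii)$ of Lemma~\ref{310} for $H$. Take any $g\in H$ and consider $C_H=\overline{\{g,g^2,g^3,\dots\}}^{\,H}$, the closure computed inside $H$. Since $g\in H\subseteq G$ and $G$ is Weil-adapted, Lemma~\ref{310} (applied to $G$) tells us that $C_G=\overline{\{g,g^2,g^3,\dots\}}^{\,G}$ is a subgroup of $G$. The point is now to compare the two closures: because $H$ carries the subspace topology, $C_H=C_G\cap H$.

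The final step is to observe that $C_G\cap H$ is a subgroup of $H$: it is an intersection of the subgroup $C_G$ of $G$ with the subgroup $H$ of $G$, hence a subgroup of $G$ contained in $H$, hence a subgroup of $H$. Therefore $C_H$ is a subgroup of $H$, condition $iii)$ of Lemma~\ref{310} holds for $H$, and so $H$ is Weil-adapted.

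I do not anticipate a genuine obstacle here; the only point requiring a little care is the identity $C_H = C_G\cap H$, which is just the standard fact that closure in a subspace is the trace of the closure in the ambient space — combined with the trivial observation that the subsemigroup $\{g,g^2,\dots\}$ already lies inside $H$. Everything else is formal, and the proof is short.
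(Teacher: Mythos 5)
Your proof is correct and rests on the same key observation as the paper's: the closure of a subsemigroup of $H$ computed in $H$ is the trace on $H$ of its closure in $G$, the latter is a subgroup by Weil-adaptedness of $G$, and the intersection of two subgroups of $G$ lying inside $H$ is a subgroup of $H$. The only difference is that the paper applies this directly to an arbitrary subsemigroup of $H$, so your preliminary reduction to monogenic subsemigroups via Lemma~\ref{310}$(iii)$ is harmless but unnecessary.
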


\begin{proof} Let $A$ be a subgroup of a Weil-adapted topological group $G$ and
let $S$ be a subsemigroup of $A$. Since $G$ is Weil-adapted, the closure
$\overline S$ of $S$ in $G$ is a group by Definition \ref{1.2}(a). Then
the closure $\overline S \cap A$ of $S$ in $A$ is a group as well. $\hfill\square$
\end{proof}

We recall Weil's Lemma saying that for an element $g$ in a locally compact 
group the subgroup $\{\dots,g^{-2},g^{-1},1,g,g^2,\dots\}$ is either 
isomorphic to the discrete group $\mathbb{Z}$, or else $\{g,g^2,g^3,\dots\}$ is dense
in a compact subgroup (see e.g.\  \cite{Hof}, 7.43). This explains the 
terminology of  Definition 1(a). Weil's Lemma also holds in any  
pro-Lie group by \cite{Mor}, 5.3.  
Accordingly, a locally compact group or a pro-Lie group is Weil-adapted if and only
if it does not contain  infinite discrete cyclic subgroups.
In particular, every compact group is Weil-adapted.

\smallskip

Now a precompact group $P$  has a compact completion. The latter is Weil-adapted, 
hence by Lemma \ref{julio}, $P$  is Weil-adapated. So Lemma \ref{297}
 implies the following corollary.

\begin{coro} \label{julius} A subsemigroup of a precompact group is a group if
it has conditionally inner points.
\end{coro}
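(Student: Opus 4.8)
The plan is to deduce the corollary directly from Lemma~\ref{297}, the only task being to check that a precompact group satisfies the hypothesis "Weil-adapted". Recall that a topological group $P$ is precompact exactly when it embeds, as a (dense) topological subgroup, into a compact group; write $\widehat P$ for such a compact completion, as in the remark preceding the corollary. So the first move is simply to pass from $P$ to $\widehat P$.

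From there the argument is a short chain. First, $\widehat P$ is a compact group, hence Weil-adapted: a compact group contains no infinite discrete cyclic subgroup, and by Weil's Lemma this is precisely what Definition~\ref{1.2}(a) requires (this was the observation recorded just above the corollary). Second, $P$ is, up to isomorphism, a subgroup of the Weil-adapted group $\widehat P$, so Lemma~\ref{julio} shows that $P$ itself is Weil-adapted. Third, let $S$ be a subsemigroup of $P$ with a conditionally inner point (the neighbourhood witnessing this being taken in $P$). Since $P$ is now known to be a Weil-adapted topological group, Lemma~\ref{297} applies verbatim and yields that $S$ is a closed subgroup of $P$, which is exactly the claim.

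I do not expect a genuine obstacle here: all of the real content has already been dispatched in Lemmas~\ref{297} and~\ref{julio}, and the corollary is essentially a two-line invocation of them. The single external ingredient is the existence of a compact group containing $P$ densely (equivalently, one may simply take "precompact" to mean "embeddable in a compact group"); once that is granted there is nothing left to prove. Should one wish to bypass the completion, an alternative is to verify condition~(iii) of Lemma~\ref{310} for $P$ directly --- for $g\in P$ the closure of $\{g,g^2,g^3,\dots\}$ formed in $\widehat P$ is a compact subgroup, and its intersection with $P$ is a subgroup --- but this only repackages the same remark about compact groups having no infinite discrete cyclic subgroups.
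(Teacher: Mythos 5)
Your proposal is correct and follows exactly the paper's own route: pass to the compact completion, note that it is Weil-adapted by Weil's Lemma, descend to $P$ via Lemma~\ref{julio}, and invoke Lemma~\ref{297}. Nothing further is needed.
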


In particular, any open subsemigroup of a precompact group is a group.
For example,  $\mathbb{Z}$ in its p-adic topology is precompact, hence is
Weil-adapted but $\mathbb{N}$ fails to be closed.  

\smallskip
We say that a subspace of a topological space is  \emph{conditionally open} 
if each of its points is a conditionally inner point.
Now recall  that a locally compact space  
is conditionally open in any Hausdorff space that 
contains it; the elementary  proof is an exercise and is 
provided  in \cite{engel}, 3.3.9.

\medskip

Accordingly,  the following conclusions are  immediate:

\begin{prop} \label{good}  Any locally compact subsemigroup  
of a Weil-adapted topo\-lo\-gi\-cal group
is a closed subgroup.
\end{prop}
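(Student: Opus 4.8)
The plan is to reduce the statement to Lemma~\ref{297}, since the substantive work has already been carried out there. The only new ingredient required is the purely topological observation recalled just before the statement: a locally compact space is conditionally open in any Hausdorff space containing it (see \cite{engel}, 3.3.9). Granting that, the proposition becomes a one-line deduction.

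Concretely, I would let $S$ be a locally compact subsemigroup of the Weil-adapted topological group $G$. Since every space in the paper is Hausdorff, $G$ is a Hausdorff space containing the locally compact space $S$, so by the cited fact $S$ is conditionally open in $G$, meaning that \emph{each} point of $S$ is a conditionally inner point of $S$; in particular $S$ has a conditionally inner point. Now Lemma~\ref{297} applies verbatim to the subsemigroup $S$ of the Weil-adapted group $G$ and yields that $S$ is a closed subgroup of $G$, which is exactly the assertion.

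I do not anticipate a genuine obstacle here: the proposition is engineered to fall out immediately from Lemma~\ref{297} combined with a standard general-topology lemma. The one point deserving a moment's care is simply to keep the conventions straight — ``locally compact subsemigroup'' is meant in the subspace topology, so that the Engelking fact is applicable as stated, and the blanket Hausdorff assumption of the paper supplies the ambient-space hypothesis of that fact. (One could instead route the argument through Lemma~\ref{julio}, applying it to the closure $\overline S$, but this detour is unnecessary; the direct appeal to Lemma~\ref{297} is shortest.)
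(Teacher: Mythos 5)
Your proposal is correct and follows exactly the route the paper intends: the Engelking fact that a locally compact subspace of a Hausdorff space is conditionally open supplies a conditionally inner point, and Lemma~\ref{297} then gives the conclusion directly. The paper itself presents the proposition as an immediate consequence of precisely these two ingredients, so there is nothing to add.
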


\begin{coro} \label{cesar} Any locally compact subsemigroup of a
precompact group is a group.
\end{coro}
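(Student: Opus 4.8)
The plan is to deduce Corollary~\ref{cesar} directly from Proposition~\ref{good}, so the only real work is to verify that a precompact group falls under the hypothesis ``Weil-adapted topological group'' and that ``closed subgroup'' simplifies to ``group'' in this setting. First I would recall, as the text does just before Corollary~\ref{julius}, that a precompact group $P$ embeds densely in its compact completion $K$; since $K$ is compact it is Weil-adapted (every compact group is Weil-adapted, as noted after Weil's Lemma), and by Lemma~\ref{julio} every subgroup of a Weil-adapted group is Weil-adapted, so $P$ itself is Weil-adapted.

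Next I would invoke Proposition~\ref{good} with $G = P$: any locally compact subsemigroup $S$ of $P$ is a closed subgroup of $P$. The only remaining point is purely cosmetic --- ``closed subgroup'' implies ``subgroup'' --- so the conclusion of the corollary follows at once. Thus the proof is essentially a one-line composition: locally compact subspaces are conditionally open (the exercise from \cite{engel} cited in the excerpt), precompact groups are Weil-adapted, and then Lemma~\ref{297} (equivalently Proposition~\ref{good}) does the rest.

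The step I expect to be the main obstacle --- to the extent there is one --- is not in the deduction but in making sure the chain of ``inheritance'' lemmas is genuinely available: specifically that Lemma~\ref{julio} applies to the dense embedding $P \hookrightarrow K$, which it does since $P$ is literally a subgroup of $K$. One might instead worry whether a locally compact subsemigroup of $P$ could fail to be locally compact ``from inside $K$,'' but local compactness is an intrinsic property of the subspace, so no issue arises. Hence I would present the corollary with a proof that simply reads: ``Since a precompact group is Weil-adapted by Lemma~\ref{julio} and the remarks following Weil's Lemma, this is immediate from Proposition~\ref{good}.''
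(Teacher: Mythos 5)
Your proposal is correct and follows essentially the same route as the paper: the authors likewise establish that a precompact group is Weil-adapted via its compact completion and Lemma~\ref{julio} (in the remarks preceding Corollary~\ref{julius}), and then obtain Corollary~\ref{cesar} as an immediate consequence of Proposition~\ref{good}. Your observation that local compactness is intrinsic to the subspace, so the conclusion only needs to drop the word ``closed,'' matches the paper's (implicit) reasoning exactly.
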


Note that any locally compact subgroup of a Hausdorff topological group is closed
(see e.g.\ \cite{Hof}, Corollary A4.24). As a consequence we have the following corollary.

\begin{coro} [{\sc F. Wright}, \cite{fwright}] \label{wright} 
Any locally compact subsemigroup of 
a compact group is a compact subgroup.
\end{coro}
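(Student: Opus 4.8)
The plan is to deduce Corollary~\ref{wright} from the machinery already assembled in the excerpt, so that essentially no new work is required. The proof will consist of chaining together three facts stated above. First, recall that every compact group is Weil-adapted; this was noted explicitly right after Weil's Lemma, since a compact group contains no infinite discrete cyclic subgroup. Second, invoke Proposition~\ref{good}: a locally compact subsemigroup $S$ of a Weil-adapted topological group is a closed subgroup. Applying this with $G$ a compact group gives that $S$ is a closed subgroup of $G$.

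The only remaining point is upgrading ``closed subgroup of a compact group'' to ``compact subgroup.'' This is immediate: a closed subset of a compact space is compact. Alternatively, one could cite the remark preceding the corollary, namely that any locally compact subgroup of a Hausdorff group is closed (\cite{Hof}, Corollary A4.24), but that is not even needed here since Proposition~\ref{good} already delivers closedness directly. So the structure is: $S$ locally compact $\Rightarrow$ (by Proposition~\ref{good}, using that $G$ is Weil-adapted) $S$ is a closed subgroup $\Rightarrow$ (since $G$ is compact) $S$ is a compact subgroup.

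I do not anticipate any genuine obstacle; the entire content has been front-loaded into Lemma~\ref{297}, its passage through local compactness via the fact that locally compact subspaces are conditionally open (the \cite{engel} exercise), and the observation that compactness forbids discrete infinite cyclic subgroups. The one thing worth stating carefully is \emph{why} a compact group is Weil-adapted, but since this has already been argued in the text via Weil's Lemma, I would simply refer back to it rather than repeat it. Thus the proof is a two-line corollary: apply Proposition~\ref{good} to conclude $S$ is a closed subgroup, then note a closed subspace of a compact space is compact.

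\begin{proof}
Let $S$ be a locally compact subsemigroup of a compact group $G$. As observed following Weil's Lemma, $G$ is Weil-adapted. Hence by Proposition~\ref{good}, $S$ is a closed subgroup of $G$. Since $G$ is compact and $S$ is closed in $G$, $S$ is compact. $\hfill\square$
\end{proof}
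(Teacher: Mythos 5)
Your proof is correct and follows essentially the same route as the paper: a compact group is Weil-adapted, so Proposition~\ref{good} makes $S$ a closed subgroup, and closedness in a compact group gives compactness. The paper's own derivation nominally passes through Corollary~\ref{cesar} together with the cited fact that locally compact subgroups of Hausdorff groups are closed, but as you correctly observe, Proposition~\ref{good} already delivers closedness directly, so your version is if anything slightly more economical.
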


\medskip

This concludes the essentially selfcontained part of this note.
The following discussion makes references to other publications.

\bigskip
\noindent {\bf Some Consequences} \qquad The literature exhibits a variety of sufficient conditions for 
a cancellative topological semigroup $S$ to be a topological group, Indeed this is true if
$S$ is
\begin{enumerate}[i)]
\item  compact (\cite[Proposition A4.34]{Hof}),
\item countably compact first countable (\cite[Corollary 5]{cc})
\item sequentially compact (\cite[Theorem 6]{can}),
\item commutative and  locally compact connected with open shifts (\cite[Theorem 4]{axioms}), or
\item commutative feebly compact first countable regular with open shifts 
(\cite[Theorem 3]{axioms}). 
\end{enumerate}

\noindent
Here a space is called \textit{feebly compact} 
if each locally finite open family is finite. What we call a shift
in a semigroup is frequently also called a translation.
Recall also that a space
is called \emph{pseudocompact} if every every real valued function on it is bounded.
The list can now be expanded if we first quote 
Corollary 2 of \cite{axioms} as follows:

\begin{prop} \label{2971} A   commutative cancellative 
locally compact pseudocompact topological semigroup with open shifts 
can be embedded in a compact topological group as a  dense open
subsemigroup.
\end{prop}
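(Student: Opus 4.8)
The plan is to embed $S$ into its group of fractions and to equip that group with a compact group topology. Since $S$ is commutative and cancellative it embeds as a subsemigroup into its abelian group of fractions $G=\{xy^{-1}:x,y\in S\}$, and I identify $S$ with its image. The open shifts hypothesis, together with cancellativity and continuity of the multiplication, makes each translation $\rho_a\colon x\mapsto xa$ a homeomorphism of $S$ onto the open subset $Sa$ of $S$. I would topologise $G$ by means of the family $U_a:=Sa^{-1}=\{g\in G:ga\in S\}$, $a\in S$: these sets cover $G$, satisfy $U_a\cup U_b\subseteq U_{ab}$, all contain $S$, and the bijections $\varphi_a\colon U_a\to S$, $g\mapsto ga$, are mutually compatible, since $\varphi_{ab}$ restricts on $U_a$ to $\rho_b\circ\varphi_a$ and on $U_b$ to $\rho_a\circ\varphi_b$, with $\rho_a,\rho_b$ homeomorphisms onto open subsets of $S$. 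Hence there is a unique topology on $G$ making every $U_a$ open and every $\varphi_a$ a homeomorphism; in it $S=\varphi_a^{-1}(Sa)$ is open and carries its original topology, so $G$ is locally compact and Hausdorff. From $\varphi_{ab}\circ\mu|_{U_a\times U_b}=m_S\circ(\varphi_a\times\varphi_b)$, where $\mu$ and $m_S$ are the multiplications of $G$ and $S$, multiplication on $G$ is continuous; and by Ellis's theorem a group carrying a locally compact Hausdorff topology with continuous multiplication is a topological group. So $G$ is a topological group containing $S$ as an open subsemigroup.

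The crux is to show that $C:=\overline S$, the closure of $S$ in $G$, is a subgroup of $G$ --- in fact all of $G$. By continuity of the multiplication $C$ is a closed subsemigroup. I would prove that no $g\in C$ generates an infinite discrete cyclic subgroup of $G$; granting this, Weil's Lemma makes $\overline{\{g,g^2,g^3,\dots\}}$ a compact subgroup of $G$ contained in $C$, so $g^{-1}\in C$, and hence the closed subsemigroup $C$ is a subgroup; since $C\supseteq S$ it follows that $G=S^{-1}S\subseteq C$, i.e.\ $G=C=\overline S$. To establish the claim I argue by contradiction: if $\langle g\rangle\cong\mathbb Z$ is discrete, and hence closed, choose an open $W\ni 1$ with $\overline W$ compact and $\langle g\rangle\cap WW^{-1}=\{1\}$, so that the open sets $g^nW$ $(n\ge1)$ are pairwise disjoint; since $g^n\in C$, each $g^nW$ meets $S$, and so $\{g^nW\cap S:n\ge1\}$ is an infinite pairwise disjoint family of nonempty open subsets of $S$, which by pseudocompactness (recall that $S$ is Tychonoff, being a subspace of $G$) cannot be locally finite. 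Hence some $p\in S$ has every neighbourhood meeting infinitely many members of this family, so $p\in\overline{\bigcup_{n\ge N}g^nW}\subseteq\{g^n:n\ge N\}\overline W$ for every $N$, using that a closed set times a compact set is closed. Then for each $N$ there is $n\ge N$ with $g^{-n}p\in\overline W$, so infinitely many distinct powers $g^{-n}$ lie in the compact set $\overline Wp^{-1}$, contradicting the discreteness and closedness of $\langle g\rangle$.

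Finally, $G=\overline S$ is a locally compact topological group which, being the closure of the dense pseudocompact subspace $S$, is itself pseudocompact. A pseudocompact topological group is precompact (Comfort--Ross), and a precompact topological group that is also locally compact is complete, hence coincides with its compact completion and is compact. So $G$ is a compact group. Since $S$ is locally compact it is conditionally open in $G$ (the fact recalled before Proposition~\ref{good}), and, being dense in $G$, it is therefore open; thus $S$ is embedded in the compact group $G$ as a dense open subsemigroup. (Corollary~\ref{cesar} then even yields $S=G$.)

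I expect the main obstacle to be the second paragraph: wringing out of mere pseudocompactness of $S$ the absence of infinite discrete cyclic subgroups of $G$, which is exactly what lets Weil's Lemma upgrade the closed subsemigroup $\overline S$ to a subgroup. The construction of the group topology on $G$ in the first paragraph is routine given the open shifts hypothesis, but has to be carried out with care.
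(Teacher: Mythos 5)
Your proof is essentially correct, but be aware that the paper contains no proof of Proposition~\ref{2971} to compare it with: the statement is quoted verbatim from Corollary~2 of \cite{axioms}, and the present note only uses it as input for Corollary~\ref{2972}. So you have in effect supplied the missing external argument. Your first paragraph is the standard construction from \cite{axioms} (group of fractions $G=SS^{-1}$ topologised by the charts $\varphi_a\colon Sa^{-1}\to S$, with Ellis's theorem giving continuity of inversion); the only line worth adding there is why $G$ is Hausdorff, namely that any two points $xy^{-1}$, $uv^{-1}$ lie in the common chart $U_{yv}$, which is homeomorphic to the Hausdorff space $S$. Your third paragraph is the routine Comfort--Ross step. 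The real content is your second paragraph, and it is the right thing to prove: density of $S$ in $G$ is exactly where pseudocompactness must enter (for $S=[1,\infty)\subseteq\mathbb R$, which is locally compact with open shifts but not pseudocompact, the group of fractions is $\mathbb R$ and $S$ is not dense). Your argument there is complete and correct: if some $g\in\overline S$ generated an infinite discrete (hence closed) cyclic group, the translates $g^nW$ would give an infinite pairwise disjoint family of nonempty open subsets of the Tychonoff pseudocompact space $S$, its cluster point $p$ would lie in each closed set $\{g^n:n\ge N\}\overline W$, and compactness of $\overline W p^{-1}$ would contradict discreteness; Weil's Lemma then makes $\overline{\{g^n\}}$ a compact group inside $\overline S$, so $\overline S$ is a group and equals $G$. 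This meshes nicely with the paper's framework, since it amounts to verifying the Weil-adaptedness criterion of condition $iii)$ for the relevant elements. Two small economies: it suffices to run the argument for $g\in S$, since $S^{-1}\subseteq\overline S$ already gives $G=SS^{-1}\subseteq\overline S$; and once $G$ is known to be compact, Corollary~\ref{wright} yields $S=G$ outright, which is the content of Corollary~\ref{2972}.
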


Now from Lemma \ref{297} and Proposition \ref{2971} we obtain the following corollary.

\begin{coro} \label{2972} Each commutative cancellative locally 
compact pseudocompact topological semigroup with open shifts 
is a compact topological group.
\end{coro}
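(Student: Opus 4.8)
The plan is to read this off from Proposition \ref{2971} together with Lemma \ref{297}. Let $S$ be a commutative cancellative locally compact pseudocompact topological semigroup with open shifts. First I would apply Proposition \ref{2971}, which embeds $S$ as a dense open subsemigroup of a compact topological group $G$; from here on I identify $S$ with its image in $G$. Nothing has to be proved at this step — it is exactly the quoted result from \cite{axioms}.

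Next I would verify the hypotheses of Lemma \ref{297} for $S\subseteq G$. The group $G$ is compact, hence Weil-adapted by the remark following Weil's Lemma. And $S$, being open in $G$, consists entirely of interior points, so every point of $S$ is a conditionally inner point of $S$ in the sense of Definition \ref{1.2}(b) (one may take the neighbourhood $U$ in that definition to be $S$ itself). Lemma \ref{297} then gives that $S$ is a closed subgroup of $G$. Alternatively, since $S$ is locally compact one could invoke Proposition \ref{good} directly, but going through Lemma \ref{297} is cleanest.

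Finally, $S$ is dense in $G$ by Proposition \ref{2971} and closed in $G$ by the previous paragraph, so $S=G$; hence $S$ is a compact topological group. There is no real obstacle here: the substantive content — producing the compact group $G$ together with the dense open embedding — is entirely contained in Proposition \ref{2971}, and what remains is the two-line observation that an open (more generally, conditionally open) subsemigroup of a Weil-adapted group is a closed subgroup, which is Lemma \ref{297}.
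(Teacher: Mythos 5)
Your proposal is correct and follows exactly the route the paper intends: the paper derives this corollary directly ``from Lemma \ref{297} and Proposition \ref{2971}'', which is precisely your argument (openness of $S$ in $G$ gives conditionally inner points, compactness of $G$ gives Weil-adaptedness, and density plus closedness forces $S=G$). Nothing is missing.
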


It is well known that every locally compact pseudocompact 
topological group is a compact topological group 
(see \cite[Theorem 2.3.2]{pseudo}).  Corollary  \ref{2972} now  
confirms  this conclusion for a class of topological semigroups.

It may be helpful to recall the example of the circle group 
$\mathbb{T}=\mathbb{R}/\mathbb{Z}$ in which the subsemigroup 
$S=(\mathbb{Z}+\sqrt{2}\mathbb{N})/\mathbb{Z}$ is not a subgroup.
  
From Proposition  \ref{good} we know that a genuine subsemigroup of 
a compact group (or indeed any Weil-complete group) cannot be locally compact. 
For the additive group $\mathbb{R}$ of reals, for any
 positive real number
$r$ and for  {\it any}
open subset $S$ such that $]2r,\infty[\subseteq S\subseteq ]r,\infty[$,  
the subset $S$ is an open subsemigroup of $\mathbb{R}$.

\bigskip
\bigskip

\noindent Julio C\'esar Hernández Arzusa\\
Programa de Matem\'aticas\\
Universidad de Cartagena\\
{Campus San Pablo - Zaragocilla}\\
{ 130014, Cartagena, Colombia}\\
jhernandeza2@unicartagena.edu.co

\bigskip
\bigskip

\noindent 
Karl Heinrich Hofmann\\
Fachbereich Mathematik\\
Technische Universit\"at Darmstadt\\
Schlossgartenstra{\ss}e 7\\
64289 Darmstadt, Germany\\
hofmann@mathematik.tu-darmstadt.de


\begin{thebibliography}{X}

\bibitem{can} {B. Bokalo and I. Guran,} 
   \textit{Sequentially compact Hausdorff cancellative semigroup is a topological group
} {Matematychni Studii {\bf 6} (1996), 39--40}.

\bibitem{engel} {R. Engelking} 
 \textit{General Topology, Revised and completed edition}
{Heldermann Verlag, Berlin, 1989}

\bibitem{axioms} {J. Hern\'andez Arzusa,} 
   \textit{Commutative Topological Semigroups Embedded into Topological Abelian Groups}, 
{Axioms {\bf DOI 10.3390/axioms9030087} (2020), 9p.}

\bibitem{Hof} {K. H. Hofmann  and S. A. Morris,} 
   \textit{The Structure of Compact Groups: a Primer for the Student, a Handbook
for the expert,} 
{De Gruyter Studies in Mathematics {\bf 25}, Walter de Gruyter, Berlin, 4th Ed.~2020.}

\bibitem{Mor} {K. H. Hofmann and S. A. Morris,}
\textit{The Lie Theory of Connected ProLie Groups}
{Tracts in Math. 2, Europen Math. Soc. Publishing House, Z\"urich, 2007.}

\bibitem{pseudo} {M. Hrusak, A. Tamariz, and M. Tkachenko,} 
   \textit{Pseudocompact Topological Spaces,} 
{Development in Mathematics {\bf 55}, Springer, Morelia M\'exico, 2018.}

\bibitem{cc} {A. Mukherjea, and N. Tserpes}, 
  \textit{A note on countably compact semigroups}, 
{J. Austral. Math. Soc., {\bf 13} (1972), 180--184.}

\bibitem{fwright} {F. B. Wright},
  \textit{Subsemigroups in compact groups},
{Proc. Amer. Math. Soc.   {\bf 7} (1956), 309--411.}
  

\end{thebibliography}
\end{document}